\def \To{\longrightarrow}
\def \Vec{\operatorname{Vec}}
\def \Z{\mathbb{Z}}
\def \R{\mathcal{R}}
\def \k{\mathbbm{k}}
\def \r{\mathbbm{R}}
\def \1{\mathbf{1}}
\def \Id{\operatorname{Id}}
\def \mod{\operatorname{mod}}
\def \k{\mathbbm{k}}
\numberwithin{equation}{section}
\newtheorem{theorem}{Theorem}[section]
\newtheorem{lemma}[theorem]{Lemma}
\newtheorem{proposition}[theorem]{Proposition}
\newtheorem{remark}[theorem]{Remark}
\begin{document}

\title[The octonions form an Azumaya algebra in Gr-Categories]{The octonions form an Azumaya algebra in certain \\ braided linear Gr-categories$^\dag$} \thanks{\tiny $^\dag$Supported by PCSIRT IRT1264, SRFDP 20130131110001, NSFC 11471186 and SDNSF ZR2013AM022.}
\subjclass[2010]{15A66, 18D10, 16H05}
\keywords{octonions, braided linear Gr-category, Azumaya algebra}

\author[T. Cheng]{Tao Cheng}
\address{School of Mathematics, Shandong University, Jinan 250100, China and School of Mathematics, Shandong Normal University, Jinan 250014, China}  \email{chtao101@163.com}
\author[H.-L. Huang]{Hua-Lin Huang*}\thanks{*Corresponding author.}
\address{School of Mathematics, Shandong University, Jinan 250100, China} \email{hualin@sdu.edu.cn}
\author[Y. Yang]{Yuping Yang}
\address{School of Mathematics, Shandong University, Jinan 250100, China} \email{yupingyang.sdu@gmail.com}
\author[Y. Zhang]{Yinhuo Zhang}
\address{Department WNI, University of Hasselt, Universitaire Campus, 3590 Diepenbeek, Belgium}
\email{yinhuo.zhang@uhasselt.be}

\date{}
\maketitle

\begin{abstract}
 By applying the idea of viewing the octonions as an associative algebra in certain tensor categories, or more precisely as a twisted group algebra by a 2-cochain, we show that the octonions form an Azumaya algebra in some suitable braided linear Gr-categories.
\end{abstract}

\section{Introduction}
As the largest of the four normed division algebras, the octonion algebra is closely related to the quaternion algebra and its natural generalization, Clifford algebras. It is well known that the quaternions form a real Azumaya algebra, while Clifford algebras are  $\Z_2$-graded Azumaya algebras \cite{w}. So a natural question is whether the octonions form an Azumaya algebra in some suitable sense.  The fact that the octonions are \emph{nonassociative} makes this question fairly awkward.

In \cite{am1}, Albuquerque and Majid made a remarkable observation that the octonion algebra is \emph{associative} if it is seen as an algebra in a suitable tensor category. Such viewpoint may even suggest novel methods for the investigation of associative algebras, see Albuquerque and Majid's subsequent study of Clifford algebras \cite{am2}. It turns out their idea helps to provide a solution to the aforementioned question. We observe an explicit relation between the octonions and the real Clifford algebra $\operatorname{Cl}_{0,3}$ via a tensor equivalence between their background tensor categories. This enables us to transfer the structural information, in particular those obtained in \cite{cgo, kn, w}, of the familiar algebra $\operatorname{Cl}_{0,3}$ to the octonions.

To state our main results, first we need to fix some notations. Throughout, let $\r$ denote the reals and $\mathbb{O}$ the octonions. By $\Vec_G^{\Phi}$ we mean the linear graded category (or Gr-category) by a group $G,$ with  associativity constraint given by a 3-cocycle $\Phi$ on $G.$ The linear Gr-category $\Vec_G^{\Phi}$ is said to be braided if there exists a braiding given by a quasi-bicharacter $\R$ of $G$ with respect to $\Phi.$

Let $\Z_2^3$ be the triple direct product of the cyclic group $\Z_2=\{0,1\}$ and $F: \Z_2^3 \times \Z_2^3 \To \r^*$ be the map defined by
\begin{equation}
F(x,y)=(-1)^{x_1x_2y_3+x_1y_2x_3+y_1x_2x_3+\sum_{1\leq i\leq j\leq 3}x_iy_j}, \quad \forall x=(x_1,x_2,x_3), \ y=(y_1,y_2,y_3) \in \Z_2^3.
\end{equation} Then one may define a twisted group algebra $\r_F[\Z_2^3].$ The important observation of Albuquerque and Majid \cite{am1} is that $\mathbb{O} \cong \r_F[\Z_2^3],$ and through the latter the octonions may be naturally viewed as an associative algebra in the $\r$-linear Gr-category $\Vec_{\Z_2^3}^{\partial F},$ where $\partial F$ is the differential of the 2-cochain $F.$ On the other hand, the real Clifford algebra $\operatorname{Cl}_{0,3}$ can also be realized as the twisted group algebra $\r_{F'}[\Z_2^3]$ with
\begin{equation}
F'(x,y)=(-1)^{\sum_{1\leq i\leq j\leq 3}x_iy_j}, \quad \forall x=(x_1,x_2,x_3), \ y=(y_1,y_2,y_3) \in \Z_2^3
\end{equation}
and be seen as an associative algebra in $\Vec_{\Z_2^3}^{\partial F'},$ see \cite{am2}. Note that the Clifford algebra $\operatorname{Cl}_{0,3}$ is associative in the usual sense as $F'$ is a 2-cocycle (hence $\partial F'=0$), while the octonions $\mathbb{O}$ are nonassociative as $F$ is not a 2-cocycle. However, we may identify these two seemingly quite different algebras as one in a suitable sense due to the following immediate fact.

\begin{proposition}
The tensor functor $(\mathcal{F}, \varphi_0, \varphi_2): \Vec_{\Z_2^3}^{\partial F}  \To \Vec_{\Z_2^3}^0$ with \[ \mathcal{F}(U)=U, \quad \varphi_0=\Id_\r, \quad \varphi_2: U \otimes V \To U \otimes V, \ u \otimes v \mapsto (-1)^{x_1x_2y_3+x_1y_2x_3+y_1x_2x_3} u \otimes v  \] for all $U, V$ and $u \in U_x, \ v \in V_y$ is a tensor equivalence and $\mathcal{F}$ maps the algebra $\mathbb{O}$ in $\Vec_{\Z_2^3}^{\partial F}$ to the algebra $\operatorname{Cl}_{0,3}$ in $\Vec_{\Z_2^3}^0.$
\end{proposition}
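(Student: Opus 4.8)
The plan is to exploit that the two Gr-categories $\Vec_{\Z_2^3}^{\partial F}$ and $\Vec_{\Z_2^3}^0$ share the very same underlying $\r$-linear category, namely $\Z_2^3$-graded real vector spaces with the obvious graded tensor product, and differ only in their associativity constraints, which on homogeneous components of degrees $x,y,z$ act by the scalars $(\partial F)(x,y,z)$ and $1$ respectively. Since the underlying functor $\mathcal{F}=\Id$ is manifestly an equivalence of linear categories and both $\varphi_0=\Id_\r$ and $\varphi_2$ (multiplication by the nonzero scalar $\beta(x,y):=(-1)^{x_1x_2y_3+x_1y_2x_3+y_1x_2x_3}$) are isomorphisms, the claim that $(\mathcal{F},\varphi_0,\varphi_2)$ is a tensor equivalence reduces entirely to checking that it is a monoidal functor; the equivalence then follows for free.

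First I would record the factorization $F=\beta\cdot F'$, immediate from the two defining formulas, and note that $\beta$ is normalized ($\beta(0,y)=\beta(x,0)=1$), so that the unit-coherence triangles commute once $\varphi_0=\Id_\r$. The substance lies in the associativity-coherence condition. Evaluated on homogeneous components of degrees $x,y,z$, and using that the target associator is trivial, it collapses to the single scalar identity
\[ (\partial F)(x,y,z)\,\beta(x,y+z)\,\beta(y,z) = \beta(x+y,z)\,\beta(x,y), \]
that is, precisely $\partial F=\partial\beta$ (all values lie in $\{\pm1\}$, so inversion is trivial). I would verify this not by expanding the cubic exponents but by applying $\partial$ to $F=\beta F'$: since $F'$ is a $2$-cocycle ($\partial F'=1$ by hypothesis) and $\partial$ is a homomorphism on $2$-cochains, one gets $\partial F=\partial\beta\cdot\partial F'=\partial\beta$, as required.

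Finally, to identify the image of the algebra, I would recall that a strong monoidal functor sends an algebra $(A,m,\iota)$ to $(\mathcal{F}A,\ \mathcal{F}(m)\circ\varphi_2,\ \mathcal{F}(\iota)\circ\varphi_0)$. Transporting the product $e_x\cdot e_y=F(x,y)\,e_{x+y}$ of $\mathbb{O}=\r_F[\Z_2^3]$ through $\varphi_2$ multiplies it by $\beta(x,y)$, giving the new product $\beta(x,y)F(x,y)\,e_{x+y}=F'(x,y)\,e_{x+y}$, where the last equality uses $\beta^2=1$ so that $\beta F=\beta^2F'=F'$. This is exactly the product of $\operatorname{Cl}_{0,3}=\r_{F'}[\Z_2^3]$, and the unit is preserved since $\varphi_0=\Id_\r$; hence $\mathcal{F}(\mathbb{O})=\operatorname{Cl}_{0,3}$ as algebras in $\Vec_{\Z_2^3}^0$.

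I expect no serious obstacle, consistent with the authors calling this an immediate fact. The one step needing genuine (if brief) verification is the associativity identity $\partial F=\partial\beta$, and the cleanest route is the factorization $F=\beta F'$ rather than a direct expansion of the $3$-cocycle $\partial F$; the latter would be tedious sign bookkeeping over $\Z_2^3$ that the factorization entirely sidesteps.
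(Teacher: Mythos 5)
Your proof is correct, but it takes a genuinely more elementary route than the paper's. The paper does not verify the monoidal-functor axioms at all: it observes that $\Vec_{\Z_2^3}^{\partial F}$ and $\Vec_{\Z_2^3}^0$ are the comodule categories of the dual quasi-Hopf algebras $(\r[\Z_2^3],\partial F)$ and $(\r[\Z_2^3],\partial F')$ with $\partial F'=0$, that these are gauge equivalent via the twist $F^{-1}F'$ (which is exactly your $\beta$, since $F=\beta F'$ and $\beta^2=1$), and then cites Albuquerque--Majid and Kassel for the fact that a gauge twist induces a tensor equivalence of comodule categories with precisely the stated $\varphi_2$; only the algebra-transport computation at the end is carried out explicitly, and there your argument and the paper's coincide essentially word for word. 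Your direct check of the associativity coherence, reduced to the scalar identity $\partial F=\partial\beta$ and settled by applying $\partial$ to the factorization $F=\beta F'$ with $\partial F'$ trivial, is exactly the computation that makes the paper's gauge twist work (twisting by $\beta$ carries the associator $\partial F$ to $\partial F\cdot(\partial\beta)^{-1}=0$), so the underlying content is the same; what differs is the packaging. Your version buys self-containedness: the reader needs only the definition of a monoidal functor, plus the standard facts that a strong monoidal functor whose underlying functor is an equivalence is a tensor equivalence and that such functors transport algebras to algebras. The paper's version buys brevity and consistency with the rest of the paper: the same gauge-equivalence framework immediately yields Remark 1.2 (the bijection between braidings of the two categories, with the explicit formula for $\mathcal{F}(\mathcal{R})$), which your axiom-checking approach would have to establish by a separate, if similar, computation. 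One small point you handled correctly but should keep explicit: depending on the direction convention for $\varphi_2$, the coherence condition reads $\partial F=\partial\beta$ or $\partial F=(\partial\beta)^{-1}$; since all values lie in $\{\pm 1\}$ this is immaterial, as you note.
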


The reader is referred to \cite{k} for unexplained notations. The proof of the preceding proposition is simple. It is enough to notice that the category $\Vec_{\Z_2^3}^{\partial F}$ is the comodule category of the dual quasi-Hopf algebra $(\r [\Z_2^3], \partial F)$ and $\Vec_{\Z_2^3}^0$ is that of $(\r [Z_2^3], \partial F'),$ and $(\r [\Z_2^3], \partial F)$ is gauge equivalent to $(\r [\Z_2^3], \partial F')$ via the twisting $F^{-1}F'.$ This gauge equivalence induces the prescribed tensor equivalence $\mathcal{F},$ see \cite{am1, k} for more details. Under this tensor equivalence, the algebra $\mathbb{O}$ (with multiplication denoted by $m_\mathbb{O}$) in $\Vec_{\Z_2^3}^{\partial F}$ is mapped to the algebra $\mathcal{F}(\mathbb{O})$ in $\Vec_{\Z_2^3}^0$ whose multiplication is given by the composition \[ \mathcal{F}(\mathbb{O}) \otimes \mathcal{F}(\mathbb{O}) \stackrel{\varphi_2}{\To}  \mathcal{F}(\mathbb{O} \otimes \mathbb{O}) \stackrel{\mathcal{F}(m_\mathbb{O})}{\To} \mathcal{F}(\mathbb{O}). \] More explicitly, for any two elements $u_x \in \mathcal{F}(\mathbb{O})_x, \ u_y \in \mathcal{F}(\mathbb{O})_y, \ x, y \in \Z_2^3,$ the multiplication is \[ u_x u_y = (-1)^{\sum_{1\leq i\leq j\leq 3}x_iy_j}u_{x+y}.\] This is exactly the multiplication of the algebra $\operatorname{Cl}_{0,3}$ in $\Vec_{\Z_2^3}^0.$

\begin{remark}
The tensor equivalence $\mathcal{F}$ also induces a one-to-one correspondence between the set of braidings of $\Vec_{\Z_2^3}^{\partial F}$ and that of $\Vec_{\Z_2^3}^0.$ Given a braiding $\mathcal{R}$ of $\Vec_{\Z_2^3}^{\partial F},$ define $\mathcal{F}(\mathcal{R})$ by \[ \mathcal{F}(\mathcal{R})(x,y)=(-1)^{x_1x_2y_3+x_1y_2x_3+ y_1x_2x_3+y_1y_2x_3+y_1x_2y_3+x_1y_2y_3}\mathcal{R}(x,y), \quad \forall x, y \in \Z_2^3. \] Then $\mathcal{F}(\mathcal{R})$ is a braiding of $\Vec_{\Z_2^3}^0$ and \[ (\mathcal{F}, \varphi_0, \varphi_2): \left(\Vec_{\Z_2^3}^{\partial F}, \mathcal{R}\right)  \To \left(\Vec_{\Z_2^3}^0, \mathcal{F}(\mathcal{R})\right) \] is a braided tensor equivalence, see \emph{ibid}.
\end{remark}

Now $\mathbb{O}$ is an algebra in the braided tensor category $\left(\Vec_{\Z_2^3}^{\partial F}, \mathcal{R}\right).$ Hence, we are in a position to ask: is there a braiding $\mathcal{R}$ such that $\mathbb{O}$ is Azumaya in $\left(\Vec_{\Z_2^3}^{\partial F}, \mathcal{R}\right)$? It is well known that a braided tensor equivalence preserves Azumaya objects, hence by Remark 1.2 our question can be translated to a more familiar situation: find a suitable braiding $\mathcal{R}$ so that the algebra $\operatorname{Cl}_{0,3}$ is Azumaya in $\left(\Vec_{\Z_2^3}^0, \mathcal{F}(\mathcal{R})\right).$  

Clearly, the latter question lies in the well developed theory of group graded Azumaya algebras and Brauer groups, see, e.g., \cite{cgo, kn, w}. It is not surprising that these pioneering works may help to solve our question. In particular, a well known result of Wall \cite{w} indicates that $\operatorname{Cl}_{0,3}$ is an Azumaya algebra in $(\Vec_{\Z_2}^0,\mathcal{P})$ where $\mathcal{P}(x,y)=(-1)^{xy}, \ \forall x,y \in \Z_2.$ Then via a pull-back of the group homomorphism $f: \Z_2^3 \To \Z_2, \quad x \mapsto x_1+x_2+x_3 \ \mod \ 2,$ it seems possible to obtain similar property of $\operatorname{Cl}_{0,3}$ in $\left(\Vec_{\Z_2^3}^0, \mathcal{F}(\mathcal{R})\right),$ and eventually to obtain that of $\mathbb{O}$ in $\left(\Vec_{\Z_2^3}^{\partial F}, \mathcal{R}\right)$ via the braided tensor equivalence $\mathcal{F}.$ This naturally leads to our first result.

\begin{theorem}
$\mathbb{O}$ is an Azumaya algebra in the braided Gr-category $(\Vec_{\Z_2^3}^{\partial F},\R)$ with
\begin{equation}
\R(x,y)=(-1)^{x_1x_2y_3+x_1y_2x_3+ y_1x_2x_3+y_1y_2x_3+y_1x_2y_3+x_1y_2y_3+\sum_{i,j=1}^{3}x_iy_j}, \quad \forall x,y \in \Z_2^3.
\end{equation}
\end{theorem}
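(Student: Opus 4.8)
The plan is to use the braided tensor equivalence $(\mathcal{F},\varphi_0,\varphi_2)$ of Remark 1.2 to transport the problem to the Clifford algebra, where the machinery of graded Azumaya algebras applies directly. Since a braided tensor equivalence preserves Azumaya objects, it suffices to prove that $\operatorname{Cl}_{0,3}$ is Azumaya in $\left(\Vec_{\Z_2^3}^0,\mathcal{F}(\R)\right)$. First I would record what the braiding $\mathcal{F}(\R)$ actually is: substituting the given $\R$ into the formula of Remark 1.2, the six cubic terms appear in both factors and cancel modulo $2$, leaving
\[ \mathcal{F}(\R)(x,y)=(-1)^{\sum_{i,j=1}^{3}x_iy_j}=(-1)^{(x_1+x_2+x_3)(y_1+y_2+y_3)}=(-1)^{f(x)f(y)}. \]
Thus $\mathcal{F}(\R)$ is exactly the pull-back along $f$ of Wall's symmetric braiding $\mathcal{P}$ on $\Z_2$, which is the structural input promised in the introduction.

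The heart of the argument is a non-degeneracy criterion for twisted group algebras in braided Gr-categories, which I would extract from \cite{cgo, kn, w}. For $A=\r_\sigma[\Z_2^n]$ in $\left(\Vec_{\Z_2^n}^0,\beta\right)$, forming the opposite algebra and the enveloping algebra inside the braided category twists the multiplication by the braiding, so that $u_x\cdot^{\mathrm{op}}u_y=\beta(x,y)\sigma(y,x)u_{x+y}$. A direct count shows that the canonical map $A\otimes A^{\mathrm{op}}\To\End(A)$ into the internal endomorphism algebra is an isomorphism precisely when the commutation bicharacter
\[ b(x,y)=\frac{\sigma(x,y)}{\sigma(y,x)}\,\beta(x,y) \]
is non-degenerate, equivalently when its Gram matrix over $\Z_2$ is invertible; this is just the statement that the braided centre of $A$ reduces to the scalars (the symmetry of $\beta$ here makes the choice of convention for $b$ immaterial). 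Granting this criterion, I would then compute $b$ for $\sigma=F'$ and $\beta=\mathcal{F}(\R)$. Using $F'(x,y)/F'(y,x)=(-1)^{\sum_{i<j}(x_iy_j+x_jy_i)}$ together with the formula above for $\mathcal{F}(\R)$, the off-diagonal contributions occur twice and cancel, leaving $b(x,y)=(-1)^{x_1y_1+x_2y_2+x_3y_3}$. Its Gram matrix is the identity $I_3$, which is invertible over $\Z_2$, so $b$ is non-degenerate and $\operatorname{Cl}_{0,3}$ is Azumaya in $\left(\Vec_{\Z_2^3}^0,\mathcal{F}(\R)\right)$. Pulling this back through $\mathcal{F}$ gives the theorem.

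The main obstacle is establishing the braided Azumaya criterion cleanly, that is, carrying the braiding through the construction of $A^{\mathrm{op}}$ and $A\otimes A^{\mathrm{op}}$ and verifying that the canonical map is an isomorphism exactly when $b$ is non-degenerate; keeping track of the braiding factors, and of the conventions that make $A$ separable so that central simple and Azumaya coincide here, is the delicate part. A second, more conceptual point worth highlighting is that the pulled-back braiding $\mathcal{F}(\R)=(-1)^{f(x)f(y)}$ is by itself degenerate on $\Z_2^3$, being constant on the cosets of $\ker f\cong\Z_2^2$, so the non-degeneracy of $b$, and hence the Azumaya property, genuinely arises only after combining this braiding with the Clifford twisting $F'$. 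This is why Wall's one-dimensional input must be fed through the specific cochain $F'$ rather than applied to $\Z_2^3$ directly.
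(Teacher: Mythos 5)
Your reduction is correct and its first half coincides exactly with the paper's: both transport the problem through the braided equivalence $\mathcal{F}$ and compute $\mathcal{F}(\R)(x,y)=(-1)^{\sum_{i,j}x_iy_j}=(-1)^{f(x)f(y)}$, reducing the theorem to showing $\operatorname{Cl}_{0,3}$ is Azumaya in $\left(\Vec_{\Z_2^3}^0,\mathcal{F}(\R)\right)$. Where you genuinely differ is the finish. The paper works straight from its definition of Azumaya in $\left(\Vec_G^0,\R\right)$ (Section 2.4: central and simple): simplicity is immediate since every nonzero homogeneous element of $\operatorname{Cl}_{0,3}$ is invertible, and centrality is verified by hand on the seven nonzero elements of $\Z_2^3$ (the diagonal values $\mathcal{F}(\R)(x,x)=-1$ dispose of $(1,0,0),(0,1,0),(0,0,1),(1,1,1)$, and explicit witnesses $y$ dispose of the rest). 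You instead package centrality as non-degeneracy of the commutation bicharacter; your computation $b(x,y)=\frac{F'(x,y)}{F'(y,x)}\,\mathcal{F}(\R)(x,y)=(-1)^{x_1y_1+x_2y_2+x_3y_3}$ is correct, and invertibility of its Gram matrix $I_3$ is exactly equivalent to the paper's seven checks, because a homogeneous $u_y$ is central iff $b(x,y)=1$ for all $x$, i.e.\ iff $y$ lies in the radical of $b$. This is a cleaner, more structural argument, and it is in fact the machinery underlying Theorem 1.4: the $168$ braidings there are precisely those of form (3.2) whose associated $b$ is non-degenerate, a point the paper relegates to a Matlab search. One caveat: the step you flag as the main obstacle --- constructing $A^{\mathrm{op}}$ and $A\otimes A^{\mathrm{op}}$ in the braided category and proving the canonical map to $\End(A)$ is an isomorphism iff $b$ is non-degenerate --- is unnecessary under the paper's conventions, since the paper \emph{defines} Azumaya in $\left(\Vec_G^0,\R\right)$ as central plus simple (delegating the comparison with the enveloping-algebra formulation to \cite{cgo,kn,w}); all that is then needed is the one-line equivalence between centrality and non-degeneracy of $b$ on homogeneous basis elements, together with automatic simplicity. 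Relatedly, your remark that the canonical map being an isomorphism "is just" triviality of the braided centre is loose (it also encodes simplicity), though harmless here where simplicity is automatic. Your closing observation --- that $\mathcal{F}(\R)$ by itself is degenerate on $\Z_2^3$ and non-degeneracy only emerges after combining with the cochain $F'$ --- is correct and a nice conceptual point not made in the paper.
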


The next natural question is that, to what extent the braiding $\R$ is defined by the octonions? Again, this question can be translated to that for $\operatorname{Cl_{0,3}}$ and we may consult \cite{cgo, kn}. It turns out that there are various braidings $\R$ which make $\mathbb{O}$ Azumaya in $(\Vec_{\Z_2^3}^{\partial F},\R)$ and they can be completely classified. This is our second main result.

\begin{theorem}
There are exactly $168$ braidings $\R$ such that $\mathbb{O}$ is an Azumaya algebra in $(Vec_{\Z_2^3}^{\partial F},\R),$ where 
\[ \R(x,y)=(-1)^{x_1x_2y_3+x_1y_2x_3+ y_1x_2x_3+y_1y_2x_3+y_1x_2y_3+x_1y_2y_3+\sum_{i,j=1}^{3}a_{ij}x_iy_j}, \quad \forall x, y\in \Z_2^3 \] with 
$(a_{11},a_{12},a_{13},a_{21},a_{22},a_{23},a_{31},a_{32},a_{33})$ listed in the following table.
\end{theorem}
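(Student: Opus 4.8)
The plan is to transport the question through the braided tensor equivalence of Remark 1.2 and reduce it to a counting problem in $\operatorname{GL}_3(\Z_2)$. Since a braided tensor equivalence preserves Azumaya objects, $\mathbb{O}$ is Azumaya in $(\Vec_{\Z_2^3}^{\partial F},\R)$ if and only if $\operatorname{Cl}_{0,3}=\F(\mathbb{O})$ is Azumaya in $(\Vec_{\Z_2^3}^0,\F(\R))$. Writing $\R$ as in the statement and applying the formula of Remark 1.2, the six cross terms cancel modulo $2$ and one is left with $\F(\R)(x,y)=(-1)^{\sum_{i,j}a_{ij}x_iy_j}$. Conversely, every braiding of $\Vec_{\Z_2^3}^0$ is a $\{\pm1\}$-valued bicharacter of $\Z_2^3$ (with trivial associativity constraint the hexagon axioms reduce exactly to bimultiplicativity), hence is of this form for a unique matrix $B=(a_{ij})\in M_3(\Z_2)$. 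This gives a bijection between the candidate braidings $\R$ and the matrices $B\in M_3(\Z_2)$, so there are $2^9=512$ candidates, and it remains to decide which of them yield an Azumaya algebra.

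Next I would extract the Azumaya criterion from the theory of $G$-graded Azumaya algebras \cite{cgo, kn}. Since $\operatorname{Cl}_{0,3}=\r_{F'}[\Z_2^3]$ is a twisted group algebra with one-dimensional, invertible homogeneous components, it is Azumaya in $(\Vec_{\Z_2^3}^0,c)$ precisely when its braided commutator pairing is nondegenerate; concretely, with $c(x,y)=(-1)^{\sum a_{ij}x_iy_j}$, the algebra is central (hence Azumaya) if and only if the $\Z_2$-bilinear form
\[ \phi(x,y)=F'(x,y)\,F'(y,x)^{-1}\,c(x,y)^{-1} \]
is nondegenerate. A direct computation in the exponent gives $F'(x,y)F'(y,x)^{-1}=(-1)^{\sum_{i\neq j}x_iy_j}$, so that $\phi(x,y)=(-1)^{\sum_{i,j}M_{ij}x_iy_j}$ with Gram matrix $M=(J+I)+B$ over $\Z_2$, where $J$ is the all-ones matrix and $I$ the identity. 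Thus $\operatorname{Cl}_{0,3}$ is Azumaya exactly when $M\in\operatorname{GL}_3(\Z_2)$. As a sanity check, the braiding of Theorem 1.3 corresponds to $B=J$, giving $M=I$, consistent with its Azumayaness.

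Finally I would count. The assignment $B\mapsto M=(J+I)+B$ is an affine bijection of $M_3(\Z_2)$, so the number of admissible $B$ equals $|\operatorname{GL}_3(\Z_2)|=(2^3-1)(2^3-2)(2^3-4)=7\cdot 6\cdot 4=168$. This yields the stated count, and the table is produced by listing, for each invertible $M$, the entries $(a_{11},\dots,a_{33})$ of the corresponding $B=M+(J+I)$.

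The main obstacle is pinning down the exact Azumaya criterion. The delicate point is justifying that for the twisted group algebra $\r_{F'}[\Z_2^3]$ the property of being Azumaya — both canonical maps $A\otimes\bar A\to\underline{\End}(A)$ and $\bar A\otimes A\to\underline{\End}(A)$ being isomorphisms — is equivalent to nondegeneracy of the single form $\phi$. This requires carefully unwinding the braided-opposite multiplication and the braiding coefficients, and confirming that the operators $u_a(-)u_b$ are generalized permutations whose linear independence is governed exactly by the Gram matrix $M$, so that invertibility of $M$ forces both canonical maps to be bijective. Once the criterion is established, the combinatorics is immediate.
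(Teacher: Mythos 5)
Your proposal is correct, and it replaces the paper's computational core with a conceptual argument. The paper performs the same reduction you do (via Remark 1.2 and its Lemma 3.1: the braidings of $\Vec_G^0$ are exactly the $512$ bicharacters $(-1)^{\sum_{i,j}a_{ij}x_iy_j}$, and simplicity of $\operatorname{Cl}_{0,3}$ is automatic since nonzero homogeneous elements are invertible), but then it decides which of the $512$ candidates make $\operatorname{Cl}_{0,3}$ central by brute force: a Matlab program, reproduced in the appendix, checks for each candidate whether some $x\in G\setminus\{0\}$ is left or right central, and the table in Theorem 1.4 is simply the program's output. You instead extract the linear-algebraic content: $u_x$ lies in the left (resp.\ right) center precisely when $x$ lies in the kernel of $M$ (resp.\ $M^{T}$) over $\Z_2$, where $M=(J+I)+B$, $J+I$ is the Gram matrix of $F'(x,y)F'(y,x)^{-1}=(-1)^{\sum_{i\neq j}x_iy_j}$ and $B=(a_{ij})$; hence centrality holds iff $M\in\operatorname{GL}_3(\Z_2)$, and since $B\mapsto (J+I)+B$ is a bijection of $M_3(\Z_2)$, the count is $|\operatorname{GL}_3(\Z_2)|=7\cdot 6\cdot 4=168$. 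This explains the otherwise unexplained number $168$, eliminates the computer search, and your criterion is consistent with the paper's table (e.g.\ $B=J$ from Theorem 1.3 gives $M=I$, while $B=0$ gives the singular $M=J+I$ and indeed does not appear). What the paper's method buys is the explicit table itself, produced mechanically; what yours buys is a proof a human can verify and a closed-form count. One remark: your closing worry about reconciling the canonical-map (Van Oystaeyen--Zhang) definition of Azumaya with nondegeneracy of your form $\phi$ is unnecessary in this setting, because the paper's working definition (Section 2.4) of Azumaya in $\left(\Vec_G^0,\R\right)$ is exactly ``central and simple,'' and $\phi$ encodes precisely left/right centrality; so your argument is already complete within the paper's framework.
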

\tiny
\[\begin{array}{ccccc}
0,0,0,0,0,0,0,0,1&  0,0,0,0,0,0,0,1,0&  0,0,0,0,0,0,1,0,0&  0,0,0,0,0,0,1,1,1&  0,0,0,0,0,1,0,0,0\\
0,0,0,0,0,1,0,1,1&  0,0,0,0,0,1,1,0,0&  0,0,0,0,0,1,1,1,1&  0,0,0,0,1,0,0,0,0&  0,0,0,0,1,0,0,1,1\\
0,0,0,0,1,0,1,0,0&  0,0,0,0,1,0,1,1,1&  0,0,0,0,1,1,0,0,1&  0,0,0,0,1,1,0,1,0&  0,0,0,0,1,1,1,0,0\\
0,0,0,0,1,1,1,1,1&  0,0,0,1,0,0,0,0,0&  0,0,0,1,0,0,0,0,1&  0,0,0,1,0,0,0,1,0&  0,0,0,1,0,0,0,1,1\\
0,0,0,1,1,1,0,0,0&  0,0,0,1,1,1,0,0,1&  0,0,0,1,1,1,0,1,0&  0,0,0,1,1,1,0,1,1&  0,0,1,0,0,0,0,0,0\\
0,0,1,0,0,0,0,1,0&  0,0,1,0,0,0,1,0,1&  0,0,1,0,0,0,1,1,1&  0,0,1,0,0,1,0,0,1&  0,0,1,0,0,1,0,1,1\\
0,0,1,0,0,1,1,0,1&  0,0,1,0,0,1,1,1,1&  0,0,1,0,1,0,0,0,0&  0,0,1,0,1,0,0,1,0&  0,0,1,0,1,0,1,0,1\\
0,0,1,0,1,0,1,1,1&  0,0,1,0,1,1,0,0,1&  0,0,1,0,1,1,0,1,1&  0,0,1,0,1,1,1,0,1&  0,0,1,0,1,1,1,1,1\\
0,0,1,1,0,0,0,0,0&  0,0,1,1,0,0,0,0,1&  0,0,1,1,0,0,0,1,0&  0,0,1,1,0,0,0,1,1&  0,0,1,1,1,0,0,0,0\\
0,0,1,1,1,0,0,0,1&  0,0,1,1,1,0,0,1,0&  0,0,1,1,1,0,0,1,1&  0,1,0,0,0,0,0,0,0&  0,1,0,0,0,0,0,0,1\\
0,1,0,0,0,0,1,0,0&  0,1,0,0,0,0,1,0,1&  0,1,0,0,0,1,0,0,0&  0,1,0,0,0,1,0,0,1&  0,1,0,0,0,1,1,0,0\\
0,1,0,0,0,1,1,0,1&  0,1,0,0,1,0,0,1,0&  0,1,0,0,1,0,0,1,1&  0,1,0,0,1,0,1,0,0&  0,1,0,0,1,0,1,0,1\\
0,1,0,0,1,1,0,1,0&  0,1,0,0,1,1,0,1,1&  0,1,0,0,1,1,1,0,0&  0,1,0,0,1,1,1,0,1&  0,1,0,1,1,0,0,0,0\\
0,1,0,1,1,0,0,0,1&  0,1,0,1,1,0,0,1,0&  0,1,0,1,1,0,0,1,1&  0,1,0,1,1,1,0,0,0&  0,1,0,1,1,1,0,0,1\\
0,1,0,1,1,1,0,1,0&  0,1,0,1,1,1,0,1,1&  1,0,0,0,0,0,0,0,0&  1,0,0,0,0,0,0,1,0&  1,0,0,0,0,0,1,0,1\\
1,0,0,0,0,0,1,1,1&  1,0,0,0,0,1,0,0,0&  1,0,0,0,0,1,0,1,1&  1,0,0,0,0,1,1,0,0&  1,0,0,0,0,1,1,1,1\\
1,0,0,0,1,1,0,1,0&  1,0,0,0,1,1,0,1,1&  1,0,0,0,1,1,1,0,0&  1,0,0,0,1,1,1,0,1&  1,0,0,1,0,0,0,1,0\\
1,0,0,1,0,0,0,1,1&  1,0,0,1,0,0,1,0,0&  1,0,0,1,0,0,1,0,1&  1,0,0,1,1,0,0,0,0&  1,0,0,1,1,0,0,1,1\\
1,0,0,1,1,0,1,0,0&  1,0,0,1,1,0,1,1,1&  1,0,0,1,1,1,0,0,0&  1,0,0,1,1,1,0,1,0&  1,0,0,1,1,1,1,0,1\\
1,0,0,1,1,1,1,1,1&  1,0,1,0,0,0,0,0,1&  1,0,1,0,0,0,0,1,0&  1,0,1,0,0,0,1,0,0&  1,0,1,0,0,0,1,1,1\\
1,0,1,0,0,1,0,0,1&  1,0,1,0,0,1,0,1,1&  1,0,1,0,0,1,1,0,1&  1,0,1,0,0,1,1,1,1&  1,0,1,0,1,0,0,1,0\\
1,0,1,0,1,0,0,1,1&  1,0,1,0,1,0,1,0,0&  1,0,1,0,1,0,1,0,1&  1,0,1,1,0,0,0,1,0&  1,0,1,1,0,0,0,1,1\\
1,0,1,1,0,0,1,0,0&  1,0,1,1,0,0,1,0,1&  1,0,1,1,1,0,0,0,1&  1,0,1,1,1,0,0,1,0&  1,0,1,1,1,0,1,0,0\\
1,0,1,1,1,0,1,1,1&  1,0,1,1,1,1,0,0,1&  1,0,1,1,1,1,0,1,1&  1,0,1,1,1,1,1,0,1&  1,0,1,1,1,1,1,1,1\\
1,1,0,0,0,1,0,0,0&  1,1,0,0,0,1,0,0,1&  1,1,0,0,0,1,1,0,0&  1,1,0,0,0,1,1,0,1&  1,1,0,0,1,0,0,0,0\\
1,1,0,0,1,0,0,1,0&  1,1,0,0,1,0,1,0,1&  1,1,0,0,1,0,1,1,1&  1,1,0,0,1,1,0,0,1&  1,1,0,0,1,1,0,1,0\\
1,1,0,0,1,1,1,0,0&  1,1,0,0,1,1,1,1,1&  1,1,0,1,0,0,0,0,0&  1,1,0,1,0,0,0,0,1&  1,1,0,1,0,0,1,0,0\\
1,1,0,1,0,0,1,0,1&  1,1,0,1,1,0,0,0,1&  1,1,0,1,1,0,0,1,0&  1,1,0,1,1,0,1,0,0&  1,1,0,1,1,0,1,1,1\\
1,1,0,1,1,1,0,0,0&  1,1,0,1,1,1,0,1,0&  1,1,0,1,1,1,1,0,1&  1,1,0,1,1,1,1,1,1&  1,1,1,0,0,0,0,0,0\\
1,1,1,0,0,0,0,0,1&  1,1,1,0,0,0,1,0,0&  1,1,1,0,0,0,1,0,1&  1,1,1,0,1,0,0,0,0&  1,1,1,0,1,0,0,1,1\\
1,1,1,0,1,0,1,0,0&  1,1,1,0,1,0,1,1,1&  1,1,1,0,1,1,0,0,1&  1,1,1,0,1,1,0,1,1&  1,1,1,0,1,1,1,0,1\\
1,1,1,0,1,1,1,1,1&  1,1,1,1,0,0,0,0,0&  1,1,1,1,0,0,0,0,1&  1,1,1,1,0,0,1,0,0&  1,1,1,1,0,0,1,0,1\\
1,1,1,1,1,0,0,0,0&  1,1,1,1,1,0,0,1,1&  1,1,1,1,1,0,1,0,0&  1,1,1,1,1,0,1,1,1&  1,1,1,1,1,1,0,0,1\\
1,1,1,1,1,1,0,1,1&  1,1,1,1,1,1,1,0,1&  1,1,1,1,1,1,1,1,1&  &
\end{array}\]
\normalsize
\section{Preliminaries}

\subsection{Braided linear Gr-categories}
Let $G$ be a group and $\k$ a field. A $\k$-linear Gr-category over $G$ is a tensor category $\Vec_G^\Phi$ which consists of finite-dimensional $\k$-spaces graded by $G$ with the usual tensor product and with associativity constraint given by a normalized 3-cocycle $\Phi$ on $G,$ i.e. a function $\Phi: G \times G \times G \rightarrow \k^*$ such that for all $x,y,z,t \in G$
\[ \Phi(y,z,t)\Phi(x,yz,t)\Phi(x,y,z)=\Phi(x,y,zt)\Phi(xy,z,t), \quad \mathrm{and} \quad \Phi(x,e,y)=1, \]
where $e$ is the unit of $G.$ A Gr-category $\Vec_G^\Phi$ is said to be braided, if there exists a braiding given by a quasi-bicharacter $\R$ with respect to $\Phi,$ that is a function $\R: G \times G \rightarrow \k^*$ satisfying
\begin{gather}
\R(xy, z)=\R(x , z)\R(y ,z)\frac{\Phi(z,x,y)\Phi(x,y,z)}{\Phi(x,z,y)},\\
\R(x , yz)=\R(x , y)\R(x , z)\frac{\Phi(y,x,z)}{\Phi(y,z,x)\Phi(x,y,z)}
\end{gather} for all $x,y,z \in G.$ Note that $\Vec_G^\Phi$ is braided only if $G$ is abelian.

Given a function $F:G\times G \rightarrow \k^*$ with $F(e,x)=F(x,e)=1,$ i.e. a 2-cochain, define
$$\partial F=\frac{F(x,y)F(xy,z)}{F(x,yz)F(y,z)},\ \ \R(x,y)=\frac{F(x,y)}{F(y,x)}$$ for all $x,y,z\in G.$ Then $\partial F$ is a 3-coboundary, i.e. a 3-cocycle cohomologous to 0, on $G$ and $\R$ is a quasi-bicharacter with respect to $\partial F.$

For more details on braided linear Gr-categories, the reader is referred to \cite{hly1,hly2}.

\subsection{Algebras in Gr-categories}
An algebra in the Gr-category $\Vec_G^\Phi$ is an object $A$ with a multiplication morphism $m: A \otimes A \to A$ and a unit morphism $u: \k \to A$ satisfying the associativity and the unitary conditions of usual associative algebras but expressed in diagrams of the category $\Vec_G^\Phi.$ More precisely, an algebra $A$ in $\Vec_G^\Phi$ is a finite-dimensional $G$-graded space $\oplus_{g \in G} A_g$ with a multiplication $\cdot$ such that \begin{gather} A_g \cdot A_h
\subseteq A_{gh}, \\ (a \cdot b) \cdot c = \Phi(|a|,|b|,|c|) a
\cdot (b \cdot c) \end{gather} for all homogeneous elements $a,b,c
\in A,$ where $|a|$ denotes the degree of $a.$ There is also a unit
element $\1$ in $A$ such that \begin{equation} \1 \cdot a=a=a \cdot
\1
\end{equation} for all $a \in A.$ Further, if the Gr-category $\Vec_G^\Phi$ is braided with braiding $\R,$ then we say that two homogeneous elements $a,b \in A$ are commutative if $a\cdot b=\R(|b|,|a|)b\cdot a.$ If any two homogeneous elements commute, then we say $A$ is a commutative algebra in $(\Vec_G^\Phi,\R).$

\subsection{Twisted group algebras}Let $G$ be a group and $F:G\times G \rightarrow \k^*$ a 2-cochain, then we can define a new algebra $\k_F[G]$ which has the same vector space as the group algebra $\k [G]$ but a different product, namely
\begin{equation}
 x \cdot y=F(x,y)xy, \ \forall x,y\in G.
 \end{equation}
Clearly $\k_F[G]$ is an associative commutative algebra in the braided linear Gr-category $(\Vec_G^{\partial F},\R)$ with $\R(x,y)=\frac{F(x,y)}{F(y,x)}$ for all $x,y \in G.$

Thanks to the observation of Albuquerque and Majid \cite{am1}, the octonion algebra $\mathbb{O}$ can be realized as a twisted group algebra $\r_F[\Z_2^3]$ with
\begin{equation*}
F(x,y)=(-1)^{x_1x_2y_3+x_1y_2x_3+y_1x_2x_3+\sum_{1\leq i\leq j\leq 3}x_iy_j}
\end{equation*}
where $x=(x_1,x_2,x_3),y=(y_1,y_2,y_3)\in \Z_2^3$. As a consequence, $\mathbb{O}$ is \emph{associative} in $\Vec_{\Z_2^3}^{\partial F}$ and \emph{commutative} in $(\Vec_{\Z_2^3}^{\partial F},\R)$ with $\R(x,y)=\frac{F(x,y)}{F(y,x)}.$

\subsection{Azumaya algebras in $\left(\Vec_G^0, \mathcal{R}\right)$, or graded Azumaya algebras}
The definition of Azumaya algebras in an arbitrary braided monoidal category can be found in \cite{oz}. For our purpose, we only need to recall the definition of Azumaya algebras in braided linear Gr-categories of form $\left(\Vec_G^0, \mathcal{R}\right)$, or equivalently, group graded Azumaya algebras defined and studied in \cite{cgo, kn, w}. 

Let $A$ be an algebra in $\left(\Vec_G^0, \mathcal{R}\right).$ Call $A$ \emph{simple in $\left(\Vec_G^0, \mathcal{R}\right)$} if it has no nontrivial ideals in $\left(\Vec_G^0, \mathcal{R}\right).$ In other words, $A$ is a $G$-graded algebra and it has no proper graded two-sided ideals. The left center $Z^l(A)$ and the right center $Z^r(A)$ of $A$ in $\left(\Vec_G^0, \mathcal{R}\right)$ are defined by
\begin{equation}
Z^l(A)=\operatorname{span}\{\mathrm{homogeneous} \ a\in A|\forall \ \mathrm{homogeneous} \ x\in A,x\cdot a=\R(|x|,|a|)a\cdot x\},
\end{equation}
\begin{equation}
Z^r(A)=\operatorname{span}\{\mathrm{homogeneous} \ a\in A|\forall \ \mathrm{homogeneous} \ x\in A,a\cdot x=\R(|a|,|x|)x\cdot a\}.
\end{equation}
Call $A$ left central (resp. right central) in $\left(\Vec_G^0, \mathcal{R}\right)$ if $Z^l(A)$ (resp. $Z^r(A)$) is equal to $\k,$ and \emph{central in $\left(\Vec_G^0, \mathcal{R}\right)$} if both $Z^l(A)$ and $Z^r(A)$ are equal to $\k$. Finally, an algebra $A$ in $\left(\Vec_G^0, \mathcal{R}\right)$ is called an \emph{Azumzya algebra in $\left(\Vec_G^0, \mathcal{R}\right)$} if it is both central and simple in $\left(\Vec_G^0, \mathcal{R}\right).$

\section{The proofs of Theorems 1.3 and 1.4}
From now on, Gr-categories are $\r$-linear, $G=\Z_2^3$ and $F, F'$ are the functions defined respectively by (1.1) and (1.2).

\begin{lemma} The following
\begin{equation}\small \{\R(x,y)=(-1)^{x_1x_2y_3+x_1y_2x_3+ y_1x_2x_3+y_1y_2x_3+y_1x_2y_3+x_1y_2y_3+\sum_{i,j=1}^{3}a_{ij}x_iy_j}|x,y\in G, a_{ij}=0,1\}\end{equation} is a complete set of braidings of $\Vec_G^{\partial F}.$
\end{lemma}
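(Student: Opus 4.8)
The plan is to read off the classification from the braided tensor equivalence of Remark 1.2 rather than to solve the quasi-bicharacter equations (2.1)--(2.2) directly for the nontrivial cocycle $\partial F$. Since $\mathcal{F}$ induces a one-to-one correspondence between the braidings of $\Vec_G^{\partial F}$ and those of $\Vec_G^0$, it suffices to determine all braidings of $\Vec_G^0$ and then pull each of them back along the explicit formula recorded in Remark 1.2. This reduces the problem to an entirely elementary count.

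First I would describe the braidings of $\Vec_G^0$. There the associativity constraint is trivial, so the quasi-bicharacter conditions (2.1) and (2.2) collapse to
\[
\R(x+y,z)=\R(x,z)\R(y,z),\qquad \R(x,y+z)=\R(x,y)\R(x,z),
\]
that is, a braiding of $\Vec_G^0$ is exactly a bicharacter $\R\colon G\times G\to\r^*$. Such a map is a homomorphism in each variable separately, hence is completely determined by its values $\R(e_i,e_j)$ on the standard generators $e_1,e_2,e_3$ of $G=\Z_2^3$. Because $2e_i=0$, each value satisfies $\R(e_i,e_j)^2=\R(2e_i,e_j)=1$, forcing $\R(e_i,e_j)\in\{\pm1\}$; writing $\R(e_i,e_j)=(-1)^{a_{ij}}$ with $a_{ij}\in\{0,1\}$ gives
\[
\R(x,y)=(-1)^{\sum_{i,j=1}^{3}a_{ij}x_iy_j}.
\]
Conversely every matrix $(a_{ij})$ manifestly defines a bicharacter, so the braidings of $\Vec_G^0$ are precisely these $2^9=512$ maps.

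Finally I would transfer back. By Remark 1.2 a braiding $\R$ of $\Vec_G^{\partial F}$ corresponds to the braiding $\mathcal{F}(\R)$ of $\Vec_G^0$ obtained by multiplying $\R$ with the sign $(-1)^{x_1x_2y_3+x_1y_2x_3+y_1x_2x_3+y_1y_2x_3+y_1x_2y_3+x_1y_2y_3}$. Since this sign is its own inverse in $\r^*$, setting $\mathcal{F}(\R)(x,y)=(-1)^{\sum_{i,j}a_{ij}x_iy_j}$ and solving for $\R$ reproduces exactly the family displayed in the statement, and the fact that the correspondence is a bijection shows that these are \emph{all} of the braidings of $\Vec_G^{\partial F}$.

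The crux, and essentially the only content, is the reduction itself: recognizing that under Remark 1.2 the task becomes the trivial classification of bicharacters of $\Z_2^3$. Were one instead to attack (2.1)--(2.2) head-on with the nonvanishing coboundary $\partial F$ inserted, the bookkeeping would be considerably heavier, and the braided equivalence is precisely what dissolves it. The residual point that demands a little care is merely checking that the sign factor of Remark 1.2 is an involution, so that pulling back a bicharacter yields exactly the displayed exponent; over $\r^*$ every factor lies in $\{\pm1\}$, so this is routine and there is no genuine analytic obstacle.
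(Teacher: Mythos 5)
Your proposal is correct and follows essentially the same route as the paper's own proof: invoke the bijection of Remark 1.2, identify the braidings of $\Vec_G^0$ with the $512$ bicharacters $(-1)^{\sum_{i,j}a_{ij}x_iy_j}$ of $\Z_2^3$, and pull back along $\mathcal{F}$. The only difference is that you spell out the details the paper leaves implicit (why bicharacters of $\Z_2^3$ have exactly this form, and why the sign factor being an involution makes the pull-back explicit), which is a welcome but not substantively different elaboration.
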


\begin{proof}
By Remark 1.2, there is a one-to-one correspondence between the set of braidings in $\Vec_G^{\partial F}$ and $\Vec_G^0.$ As for the latter, the braidings $\R$ are precisely the usual bicharacter on $G,$ that is,  
\begin{equation}
\R(x,y)=(-1)^{\sum_{i,j=1}^{3}a_{ij}x_iy_j}, \quad \forall x,y\in G, \ \ a_{ij} \in \{0,1\}.
\end{equation}
Then by the pull-back along the functor $\mathcal{F},$ we get the corresponding set (3.1) of braidings of $\Vec_G^{\partial F}.$
\end{proof}

\begin{proof}[\bf{The proof of Theorem 1.3}]
Let $\R$ be the braiding as given by (1.3). Then the corresponding braiding $\mathcal{F}(\mathcal{R})$ of $\Vec_G^0$ is given by \[ \mathcal{F}(\mathcal{R})(x,y)=(-1)^{\sum_{i,j=1}^{3}x_iy_j}, \quad \forall x,y \in G. \] By the comment after Remark 1.2, it suffices to prove that $\operatorname{Cl_{0,3}}$ is Azumaya in $\left(\Vec_G^0, \mathcal{F}(\mathcal{R})\right).$ We verify that $\operatorname{Cl_{0,3}}$ is both central and simple in $\left(\Vec_G^0, \mathcal{F}(\mathcal{R})\right).$

Note that each non-zero homogeneous element of $\operatorname{Cl_{0,3}}$ is invertible, so clearly $\operatorname{Cl_{0,3}}$ has no proper graded ideals and hence it is simple in $\left(\Vec_G^0,\mathcal{F}(\R)\right).$ To prove the theorem, it remains to prove that $\operatorname{Cl_{0,3}}$ is central in $\left(\Vec_G^0, \mathcal{F}(\mathcal{R})\right).$ Note that the braiding $\mathcal{F}(\mathcal{R})$ is symmetric, i.e., $\mathcal{F}(\mathcal{R})(x,y)\mathcal{F}(\mathcal{R})(y,x)=1$ for all $x,y\in G.$ So we have $Z^l(\operatorname{Cl_{0,3}})=Z^r(\operatorname{Cl_{0,3}})$. In the following, let $Z$ denote $Z^l(\operatorname{Cl_{0,3}})=Z^r(\operatorname{Cl_{0,3}})$ and we prove $Z=\r$ by direct verification.

For $x=(1,0,0),(0,1,0),(0,0,1),(1,1,1)$, we have $\mathcal{F}(\R)(x,x)=-1,$ so $x\cdot x\neq \mathcal{F}(\R)(x,x)x\cdot x,$ hence $(1,0,0),(0,1,0),(0,0,1),(1,1,1)$ do not belong to $Z.$ For $x=(0,1,1),$ let $y=(0,1,0)$ and we have $\mathcal{F}(\R)(x,y)=1.$ But $x\cdot y=F'(x,y)xy=-y\cdot x,$ hence $(0,1,1)$ is not in $Z.$ Similarly, for $x=(1,0,1),(1,1,0)$, we find $y=(1,0,0)$ such that $x\cdot y\neq \mathcal{F}(\R)(x,y)y\cdot x$, hence $(1,0,1),(1,1,0)$ are not in $Z.$ Consequently, $Z=\r.$

The theorem is proved.
\end{proof}

\begin{proof}[\bf{The proof of Theorem 1.4}]
Again by Remark 1.2 and the comment thereafter, it is enough to determine all the braidings $\R$ of $\Vec_G^0$ such that $\operatorname{Cl_{0,3}}$ is Azumaya in $\left(\Vec_G^0, \R \right).$ As $\operatorname{Cl_{0,3}}$ is always simple in $\left(\Vec_G^0, \R \right),$ it suffices to find $\R$ of form (3.2) such that for any homogeneous element $x \in \operatorname{Cl_{0,3}}$ which is not a scalar multiple of the identity, there exists at least one homogeneous element $y \in \operatorname{Cl_{0,3}}$ such that $x \cdot y \neq \R(x,y) y\cdot x$ or $y \cdot x \neq \R(y,x) x \cdot y.$ The set of such $\R$ can be completely determined with a help of a simple program of Matlab, see the appendix. Then by pulling back along $\mathcal{F},$ we get the desired set of braidings of $\Vec_G^{\partial F}$ as presented in Theorem 1.4. 
\end{proof}

Finally, we remark that it is possible to classify all the braided linear Gr-categories in which the octonion algebra is Azumaya by our method with a help of the result of Elduque \cite{e} which provides all the gradings on the octonions by finite groups and thus all the possibilities of $\mathbb{O}$ as an algebra in braided linear Gr-categories. As this is only a matter of computations, we do not include further details.

\section{Appendix}
\subsection{The method of the computation of bradings.}

 By (3.2), there are 512 possibilities of braidings $\R$ for $\Vec_G^0.$ Firstly we can determine the braiding $\R$ such that $\operatorname{Cl_{0,3}}$ is not central in $\left(\Vec_G^0,\R\right).$ Take the elements of $G$ as a basis of $\operatorname{Cl_{0,3}}.$ Then $\operatorname{Cl_{0,3}}$ is not central in $\left(\Vec_G^0,\R\right)$ if and only if there is an $x \in G-(0,0,0)$ which is a left or right central element. Clearly, this condition can be explicitly expressed by systems of linear equations. Then by subtracting the braidings obtained in the previous step, we get all of those braidings $\R$ such that  
$\operatorname{Cl_{0,3}}$ is central in $\left(\Vec_G^0,\R\right).$

\subsection{The program of the computation} The following is a program of Matlab.

\noindent $AA=[1 0 0;0 1 0; 0 0 1; 0 1 1;1 0 1; 1 1 0;1 1 1];$\\
$m=0;$  
$A=\mathrm{zeros}(9,1);$\\
$\mathrm{Final}\_M=\mathrm{zeros}(9,1);$\\
$M=1:512;$\\
$N=[];$\\
while size$(A,2)<512$\\
       $B=\mathrm{randint}(3,3);$ 
       $A(:,\mathrm{size}(A,2)+1)=B(:);$\\ 
       $C=A';$\\
       $ D=\mathrm{unique}(C,'\mathrm{rows}');$\\ 
      $ A=D';$\\
end \\
 for $i=1:7$\\
     $x=AA(i,:);$  
 for $k=1:512 $ 
     $A\_k=A(:,k);$\\
     $A\_3=\mathrm{reshape}(A_k,3,3);$\\
     $R=r\_\mathrm{fun}(x,x,A_3);$\\
 $if R==1 $  
     $R=0;$\\
     $for j=1:7 $ 
     $y=AA(j,:);$\\
     $\mathrm{index}=y~=x;$\\
     $\mathrm{ddd}=\mathrm{index}(1)+\mathrm{index}(2)+\mathrm{index}(3);$ 
     if $\mathrm{ddd}>0  $\\
        $R=R+r\_\mathrm{fun}(x,y,A_3);$\\
        end\\
     end\\
     if $R==-6 $ 
         $m=m+1$ 
     $\mathrm{Final}\_M(:,\mathrm{size}(\mathrm{Final}\_M,2)+1)=A(:,k);$\\
     $N=[N k];$\\
     $N=\mathrm{unique}(N);$\\
     end\\
   end\\
  end\\
 end

\end{document}